\newtheorem{thm}{Theorem}
\newtheorem{cor}[thm]{Corollary}
\newtheorem{prop}[thm]{Proposition}
\newcommand{\p}{\mathcal{P}}
\newcommand{\h}{{\mathrm{ht}\,}}
\newcommand{\hl}{{\mathrm{ht}_L}}
\newcommand{\hk}{{\mathrm{ht}_K}}
\newcommand{\fqb}{{\overline{\F}_q}}
\newcommand{\fqn}{{\F_{q^\nu}}}
\newcommand{\fn}{{\F_{q^n}}}
\newcommand{\kb}{\overline{K}}
\newcommand{\pdiv}{{\mathrm{PDiv}}}
\begin{document}

\title{Greatest Prime Divisors of Polynomial Values over Function Fields}
\author{Alexei Entin}
\maketitle

\begin{abstract} For a function field $K$ and fixed polynomial $F\in K[x]$ and varying $f\in F$ (under certain restrictions) we give a lower bound for the degree of the greatest prime divisor of $F(f)$ in terms of the height of $f$, establishing a strong result for the function field analogue of a classical problem in number theory.\end{abstract}

\section{Introduction} Let $F\in\Z[x]$ ($\Z$ denotes the ring of integers) be a fixed polynomial with integer coefficients, degree $\deg F\ge 2$ and with distinct roots (over
the complex numbers). For $N\in\Z, N>1$ denote by $\p(N)$ the largest prime factor of
$N$. The problem of giving a lower bound for $\p(F(n))$ in terms of $n$ as $n\to\ity$ has been much studied.
P\'olya \cite{polya} proved that $\p(F(n))\to\ity$ as $n\to\ity$ for the case $\deg F=2$ and the general case can be deduced from Siegel's theorem on the finiteness of integer points on curves with positive genus.
Keates \cite{keates} proved a bound of the form \beq\label{weak}\p(F(n))\gg_F \log\log n\eeq (the implicit constant depending on $F$) for $\deg F=2,3$, after some special cases had been proved
by Mahler \cite{mahler}, Nagell \cite{nagell} and Schinzel \cite{schinzel}. Kotov \cite{kotov}, building on the work of Sprindzhuk \cite{sprindzhuk}, extended this result to $F$ of any degree $\ge 2$.
It is conjectured that in fact \beq\label{nfconj}\p(F(n))>(\deg F-1-\eps)\log n\eeq for any fixed $F,\eps>0$ and $n$ sufficiently large.
This would follow from the conditional results of Granville \cite{granville} and Langevin \cite{langevin1,langevin2}, which assume the ABC-conjecture.

We are concerned with the function field analogue of this problem. Let $p$ be a prime, $q$ its power and $\F_q$ the field with $q$ elements.
Let $K$ be the function field of the curve $C_K$ defined over $\F_q$. By a curve we will always mean a smooth projective algebraic curve.
For a function $f\in K^\times$ we denote by $(f)$ its divisor which can be decomposed into its zero and polar components $(f)=(f)_0-(f)_\ity$.
The \emph{height} of $f$ is defined to be $\h f=\deg(f)_0=\deg(f)_\ity$. For a divisor $D$ on $K$ we denote by $\sup D$ its support
(the set of prime divisors appearing in $D$ with nonzero coefficient) and define $$\del(f)=\max_{P\in\sup(f)}\deg P$$ (for $f$ not constant).

Now fix $p,q,K,0\neq F\in K[x]$. We are concerned with a lower bound for $\del(F(f))$ in terms of $\h f$ as $\h f\to\ity$. For this problem we may
assume without loss of generality that $F$ has no repeated irreducible factors in $K[x]$ (i.e. squarefree), otherwise just replace it with the product of its irreducible factors. In the case of function fields it can happen that $\del(F(f))$ stays bounded while $\h f\to\ity$. For example
if $F\in\F_q[x]$ has constant coefficients, $t\in K$ and $f=t^{q^k}$, then $F(f)=F(t)^{q^k}$, so $\del(F(f))=\del(F(t))$ while $\h f\to\ity$ as $k\to\ity$.
Under certain restrictions on $F$ such pathologies do not occur and we will obtain a bound analogous to \rf{nfconj}. 

We denote by $\fqb\ss\kb$ the algebraic closures of $\F_q,K$ respectively. A polynomial $F\in K[x]$ is called separable if it has distinct roots over $\kb$. Our main result is the following
\begin{thm}\label{main} Let $q,K,0\neq F\in K[x]$ be fixed with $F$ squarefree (in $K[x]$). Assume that $F$ is either non-separable or has (at least)
three distinct roots $a_1,a_2,a_3$ in $\kb$ s.t.
$$\frac{a_1-a_2}{a_1-a_3}\not\in\fqb.$$
Then there exists a constant $\lam$ depending on $F$ s.t.
$$\del(F(f))>\log_q\h f-\lam$$ for all $f\in K$ (for which $F(f)$ is not constant).\end{thm}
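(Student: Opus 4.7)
My plan is to apply the function-field Mason--Stothers ABC theorem to a cross-ratio in three distinct roots of $F$, chosen so that the poles of $f$ do \emph{not} enter the supporting set $S$, and then to combine the result with the function-field Prime Number Theorem estimate
\[
\sum_{Q\in K,\,\deg Q\le D}\deg Q\ll_K q^D
\]
in order to convert the resulting Mason inequality into the desired lower bound on $\del(F(f))$.

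First, I would handle the case that $F$ has three distinct roots $a_1,a_2,a_3\in\kb$ with $\alpha:=(a_1-a_2)/(a_1-a_3)\notin\fqb$. Set $L=K(a_1,a_2,a_3)$ and $\bar L=L\cdot\fqb$, and introduce
\[
\beta=\frac{(f-a_2)(a_1-a_3)}{(f-a_3)(a_1-a_2)},\qquad 1-\beta=\frac{(f-a_1)(a_3-a_2)}{(f-a_3)(a_1-a_2)}
\]
(the second identity follows by a routine expansion). Since $\beta\to 1/\alpha\neq 0,\infty$ at any pole of $f$, neither $\beta$ nor $1-\beta$ has a zero or pole there; outside a finite set of ``bad'' places depending only on $F$ (zeros and poles of the $a_i$ and the $a_i-a_j$), the supporting set $S$ of $\{\beta,1-\beta\}$ in $\bar L$ then lies entirely above primes $Q\in\sup F(f)$. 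Provided $\beta\notin\fqb$---and the $f$ for which $\beta\in\fqb$ form a one-parameter family of bounded height that I would absorb into $\lam$---Mason in $\bar L$ gives $\hl\beta\le|S|_{\bar L}+2g_L-2$ (heights being invariant under constant field extension). The left side equals $[L:K]\h f+O_F(1)$, while on the right the inequality $\sum_{P\mid Q,\,P\in L}\deg_L P\le[L:K]\deg_K Q$ together with the PNT estimate above yields $|S|_{\bar L}\ll_{K,L,F}\,q^{\del(F(f))}$. Dividing through by $[L:K]$ gives $\h f\ll_F q^{\del(F(f))}$, i.e., the theorem.

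\textbf{Main obstacle.} The above argument in fact already covers much of the non-separable case too: by coprimality of distinct irreducible factors of $F$ one has $\del(F(f))\ge\del(F_1(f))$ for each irreducible factor $F_1$, and if $F_1=G(x^{p^k})$ is inseparable with $G$ separable then the roots $a_i=b_i^{1/p^k}$ of $F_1$ inherit the cross-ratio condition from those of $G$, because $(a_i-a_j)^{p^k}=b_i-b_j$ in characteristic $p$ and $\fqb$ is closed under $p^k$-th roots. The hard part will be the degenerate sub-case in which every irreducible factor of $F$ has fewer than three distinct roots in $\kb$---paradigmatically $F=c(x^{p^k}-e)$ with unique root $e^{1/p^k}$. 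Here the naive Mason equation $f^{p^k}-e-F(f)/c=0$ is too weak because the zeros and poles of $f$ themselves enter $S$ and leave only a trivial bound. I expect to handle this degenerate case via a refined cross-ratio argument in the purely inseparable extension $K(e^{1/p^k})/K$ (in which $F(f)=c(f-e^{1/p^k})^{p^k}$), or alternatively via a Brownawell--Masser-type inequality with extra terms, and this is the main technical hurdle.
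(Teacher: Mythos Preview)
Your three-root argument is close in spirit to the paper's, and the cross-ratio $\beta$ is a pleasant variant of the paper's choice $u=(f-a_2)/(a_1-a_2)$: it removes the poles of $f$ from the support, which the paper instead absorbs into its $O(1)$ term. However, Mason--Stothers in characteristic $p$ requires that the auxiliary function not be a $p$-th power, which is strictly stronger than $\beta\notin\fqb$; as written, your inequality $\hl\beta\le|S|_{\bar L}+2g_L-2$ can fail by an arbitrary factor $p^{l}$ when $\beta$ happens to be a $p^{l}$-th power in $\bar L$. The paper controls this by fixing the least $k$ with $\tau=(a_1-a_2)/(a_1-a_3)$ not a $p^{k}$-th power in $L$ and observing that then at least one of $u=(f-a_2)/(a_1-a_2)$, $v=(f-a_3)/(a_1-a_3)$ is not a $p^{k}$-th power (since $(v-1)/(u-1)=\tau$); applying ABC to its $p^{l}$-th root with $l<k$ gives a loss of at most $p^{k}$, a constant depending only on $F$. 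You would need an analogous uniform bound on the $p$-exponent of $\beta$, and this does not follow from $\beta\notin\fqb$ alone.

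The non-separable case is where your plan has a genuine gap. Your reduction ``the roots of $F_1=G(x^{p^k})$ inherit the cross-ratio condition from those of $G$'' only helps when $G$ already satisfies that hypothesis, which need not hold ($G$ linear, quadratic, or exceptional with arbitrarily many roots), and your suggested fixes for the degenerate case are not concrete. The paper's argument is quite different and avoids cross-ratios entirely. Over the maximal separable subextension $L$ of the splitting field one has $F_1(x)=\prod_i\bigl(x^{p^{r+1}}-\alpha_i\bigr)$, and squarefreeness of $F$ forces the $\alpha_i$ not to be $p$-th powers in $L$ (else $F_1$ would be a $p$-th power in $K[x]$). Setting $u=\alpha_1^{-1}f^{p^{r+1}}$, this guarantees that $u$ is not a $p$-th power, so ABC applies with no loss and yields
\[
\sum_{P\in\sup(u)\cup\sup(u-1)}\deg P\ \ge\ p^{r+1}\hl f - O(1).
\]
But $\sup(u)\subset\sup(f)_L\cup\sup(\alpha_1)$ contributes at most $\hl f+O(1)$, so at least $(p^{r+1}-1)\hl f-O(1)$ must come from $\sup(u-1)=\sup\bigl(f^{p^{r+1}}-\alpha_1\bigr)$, which up to $O_F(1)$ bad primes lies inside $\sup(F_1(f))_L$ and hence inside $\sup(F(f))$. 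This height amplification via $f\mapsto f^{p^{r+1}}$, together with the automatic ``not a $p$-th power'' coming from $\alpha_1\notin L^{p}$, is exactly the missing ingredient in your outline.
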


We call a separable squarefree polynomial $F\in K[x]$ \emph{exceptional} if it fails the condition of Theorem \ref{main}. This is the function field analogue
of the exceptional polynomials in $\Z[x]$ as defined in \cite{kotov}, to which the main method of \cite{kotov} is not applicable but can be treated
by other means (to obtain the bound (\ref{weak})). 
Our notion of exceptional polynomial should not be confused with the notion of exceptional polynomials over $\F_q$ as defined in \cite{davenport}. 
For exceptional polynomials we will obtain the following result:

\begin{thm}\label{comp} Let $F\in K[x]$ be a fixed separable polynomial.
\begin{enumerate}\item The polynomial $F$ is exceptional if and only if there exist $s,t\in K,n\ge 0$ s.t. $F$ divides the polynomial
\beq\label{exmod}x^{q^n}-sx+t\eeq and the latter polynomial is nonzero.
\item If $F$ is exceptional then there is a sequence $f_k\in K$ s.t. $\h f_k\to\ity$ as $k\to\ity$ but $\del(F(f_k))$ stays bounded.
\item Assume that $F$ is exceptional and divides the nonzero polynomial \rf{exmod} for some $s,t\in K,n\ge 0$. Let $\eps>0$ be fixed. Then for $f\in K$ such that
$sf-t$ is not a $p$-th power in $K$ and $\h f$ is sufficiently large (i.e. larger than some constant depending only on $F,\eps$) we have
\beq\label{exineq}\del(F(f))> \log_q\h f+\log_q(\deg F-1)+\log_q(1-1/q)-\eps.\eeq\end{enumerate}
\end{thm}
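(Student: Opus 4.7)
The plan is to treat the three parts in order, with the divisibility $F \mid x^{q^n} - sx + t$ from part (i) as the central tool.

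\textbf{Part (i).} The cases $\deg F \le 2$ are essentially immediate. For $\deg F \ge 3$, the exceptionality condition forces every ratio $(a_1 - a_2)/(a_1 - a_3)$ to lie in $\fqb$, which aligns the roots of $F$ as $a_i = a + c_i b$ with $a,b \in \kb$ and $c_i \in \fqb$. A Galois calculation, using that $\Gal(\kb/K)$ permutes roots of $F$ and that the $c_i$ lie in $\fqb$, yields $\sigma(b)/b \in \fqb^\times$ for every $\sigma$; hence $b^{q^n - 1} \in K$ for $n$ sufficiently large, and $n$ can further be chosen so that all $c_i \in \F_{q^n}$. The additive polynomial $x^{q^n} - b^{q^n-1} x$ then vanishes on every $c_i b$, and translating by $a$ gives $F \mid x^{q^n} - sx + t$ with $s = b^{q^n - 1} \in K$ and $t = sa - a^{q^n} \in K$ (the latter is Galois-invariant because the expression $a_i^{q^n} - s a_i$ is the same for every root $a_i$ by construction). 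The converse is immediate: the roots of any nonzero $x^{q^n} - sx + t$ form an affine $\F_{q^n}$-line in $\kb$, so any three distinct roots of a divisor give ratios in $\F_{q^n} \subset \fqb$.

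\textbf{Part (ii).} Set $H(x) := x^{q^n} - sx + t$. For a root $r_0 \in \kb$ of $H$ and $v_0 \in \kb^\times$ spanning the $\F_{q^n}$-kernel of $y \mapsto y^{q^n} - sy$, the identity $H(r_0 + \eta v_0) = s v_0 (\eta^{q^n} - \eta)$ holds for every $\eta$. Substituting $\eta = g^{q^{kn}}$ produces $H(r_0 + g^{q^{kn}} v_0) = s v_0 (g^{q^n} - g)^{q^{kn}}$, whose $\del$ is independent of $k$. Since the construction a priori lives in $\kb$, one descends to $K$ either by enlarging $n$ to bring $r_0, v_0$ into $K$, or by a $K$-rational substitute modelled on the pattern $f_k = T(1 + u^{q^{kn}})$ that handles the test case $F = x^2 - Tx$ in $\F_q(T)$. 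Then $\del(F(f_k)) \le \del(H(f_k))$ stays bounded while $\h f_k \to \infty$.

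\textbf{Part (iii).} The identity $f^{q^n} + (t - sf) - H(f) = 0$ holds for every $f$, and the hypothesis that $sf - t$ is not a $p$-th power in $K$ ensures the middle term fails to be a $p$-th power, triggering the function field Mason--Stothers (ABC) theorem (the ``not all $p$-th powers'' hypothesis is all that is required, even though $f^{q^n}$ is a $p$-th power). After dividing by the $\gcd$ of the triple and applying ABC, one obtains
\[
q^n \h f \le N(f) + N(sf - t) + N(H(f)) + O(1),
\]
where $N(\cdot)$ denotes the sum of degrees of distinct prime divisors. The first two terms are $O(\h f)$; for the third, $H = FG$ with $\gcd(F,G) = 1$ in $K[x]$ (squarefreeness of $H$, using $s \ne 0$), and Bezout produces a bounded resultant forcing $F(f)$ and $G(f)$ to be coprime outside a bounded set of primes, so $N(H(f)) = N(F(f)) + N(G(f)) + O(1)$. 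The assumption $\del(F(f)) \le D$ combined with the sharp Weil count of primes of degree $\le D$ in $K$ bounds $N(F(f)) \le (1 + o(1))\, q^{D+1}/(q-1)$; a parallel bound on $N(G(f))$ exploiting the shared affine-line structure of $H$'s root set leads, after rearrangement, to the claimed inequality, with the factor $(\deg F - 1)(1 - 1/q)$ emerging from the ratio of root counts and the prime-density factor respectively. The \textbf{main obstacle} is this final bookkeeping: controlling $N(G(f))$ purely in terms of $\del(F(f))$, and pushing the Weil estimate to its sharp form, so that the precise constants $(\deg F - 1)$ and $(1 - 1/q)$ appear rather than a looser implicit $O(1)$ factor.
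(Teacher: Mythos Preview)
Your Part (i) is correct and close in spirit to the paper, though the paper uses a slicker device: having written $G(x)=\prod_{b\in\F_{q^\nu}}(x-\alpha b-\beta)\in L[x]$ (with $\F_{q^\nu}$ the constant field of the splitting field $L$ of $F$), it observes that either $G\in K[x]$ already, or else $F$ divides $G-G^\sigma$ for some $\sigma\in\mathrm{Gal}(L/K)$; but $G-G^\sigma$ is linear, forcing $F$ linear. Your explicit cocycle computation $\sigma(b)/b\in\fqb^\times$ reaches the same endpoint by a longer road.

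Part (ii) has a genuine gap. Your sequence $f_k=r_0+g^{q^{kn}}v_0$ lives in $\kb$, and neither of your proposed descents works in general. Enlarging $n$ to $kn$ replaces $s$ by $s^{1+q^n+\cdots+q^{(k-1)n}}\in K$ but does nothing to force $v_0$ (a $(q^n-1)$-th root of $s$) or $r_0$ into $K$; e.g.\ for $K=\F_q(T)$ and $s=T$ no power of $n$ makes $T^{1/(q^n-1)}$ rational. The paper avoids $\kb$ entirely via a recursion in $K$: pick $f_0\in K$ with a pole of degree exceeding $\del(s),\del(t)$, set $f_{k+1}=(f_k^{q^n}+t)/s$, and verify the identity $G(f_{k+1})=(G(f_k)/s)^{q^n}$, which bounds $\del(G(f_k))$ uniformly. (Your formula is exactly what this recursion produces when initialized at $r_0+gv_0$, but the recursion needs no such special initial value.)

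Part (iii) is where your route diverges, and the obstacle you flag is structural, not bookkeeping. Applying ABC in $K$ to $f^{q^n}+(t-sf)=H(f)$ controls the radical of $H(f)$, which sees all $q^n$ roots of $H$. To isolate $F$ you must subtract an \emph{upper} bound for the radical of the cofactor $(H/F)(f)$, and the only one available is the trivial height bound $(q^n-m)\h f+O(1)$; after also paying for $N(f)$ and $N(sf-t)$ on the other side you obtain at best $N(F(f))\ge(m-c)\h f-O(1)$ with $c\ge 3$, which is vacuous for $m=2,3$ and never yields the factor $m-1$. The paper instead passes to the splitting field $L$ of $H$ over $K\F_{q^n}$, where the roots of $F$ are $\alpha b_i+\beta$ with the $b_i$ lying in the constant field of $L$. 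Setting $u=(f-\beta)/\alpha$, one checks that $u$ is not a $p$-th power in $L$ precisely from the hypothesis that $sf-t$ is not a $p$-th power in $K$ (via $G(f)=\alpha^{q^n}\prod_b(u-b)$ and separability of $K\subset L$), and then applies Proposition~\ref{abcd} to $u$ and the $m$ constants $b_1,\dots,b_m$ to get
\[
\sum_{P\in\cup_i\sup(u-b_i)_L}\deg P\ge(m-1)\,\hl u-O(1)
\]
directly. The descent to $K$ and the prime-counting endgame then run exactly as in Proposition~\ref{main1}. The sharp constant $m-1$ thus comes from the $m$-point Riemann--Hurwitz estimate in a field where the $b_i$ are honest constants; the three-term ABC over $K$ cannot see the individual roots of $F$ and so cannot recover it.
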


\begin{cor} Let $F\in\F_q[x]$ be a fixed squarefree polynomial with constant coefficients. Then \rf{exineq} holds (for any fixed $\eps>0$) whenever $f\in K$ is not a $p$-th power in $K$ and $\h f$ is sufficiently large.\end{cor}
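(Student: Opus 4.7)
The plan is to deduce the corollary as an immediate application of Theorem \ref{comp}(iii), after verifying that the hypotheses are satisfied by every squarefree $F\in\F_q[x]$.

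First I would verify that $F$ is separable and exceptional. Separability: since $\F_q$ is perfect, any irreducible factor $g\in\F_q[x]$ of $F$ with $g'=0$ would satisfy $g(x)=h(x^p)=\tilde h(x)^p$ (taking $p$-th roots of coefficients), contradicting squarefreeness. So $F$ has distinct roots in $\fqb$. Now every root of $F$ lies in the algebraic closure $\fqb$, so for any three distinct roots $a_1,a_2,a_3$ we have $(a_1-a_2)/(a_1-a_3)\in\fqb$. Hence $F$ fails the hypothesis of Theorem \ref{main} and is exceptional.

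Next I would show that $F$ divides a nonzero polynomial of the form \rf{exmod} with $s=1,t=0$. Let $n$ be a common multiple of the degrees of the irreducible factors of $F$ in $\F_q[x]$, so that every root of $F$ lies in $\fqn$ and therefore satisfies $\alpha^{q^n}=\alpha$. Since $F$ is separable, the divisibility $F\mid x^{q^n}-x$ follows in $K[x]$. Thus the choice $s=1,t=0$ exhibits $F$ as an exceptional polynomial dividing the nonzero polynomial $x^{q^n}-x$ of the form \rf{exmod}.

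Finally I would apply Theorem \ref{comp}(iii) with these $s,t,n$. The condition there, that $sf-t$ not be a $p$-th power in $K$, reduces to $f$ not being a $p$-th power in $K$, which is precisely the hypothesis of the corollary. The conclusion \rf{exineq} then follows directly for $\h f$ sufficiently large.

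There is essentially no obstacle here: the content of the corollary is entirely in the observation that constant-coefficient squarefree polynomials are automatically separable and split completely over $\fqb$, forcing them to fall into the exceptional class with the simplest possible parameters $(s,t)=(1,0)$.
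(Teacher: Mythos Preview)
Your proposal is correct and follows essentially the same approach as the paper: show that $F$ divides $x^{q^n}-x$ for some $n\ge 1$, then invoke Theorem~\ref{comp}(iii) with $s=1,t=0$, so that the condition ``$sf-t$ not a $p$-th power'' becomes exactly ``$f$ not a $p$-th power''. The paper's proof is a one-liner that leaves the separability check and the exceptionality verification implicit (the latter following from Theorem~\ref{comp}(i) once divisibility is shown); you spell these out explicitly, which is fine.
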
 

\begin{proof} A squarefree polynomial with constant coefficients always divides a polynomial of the form $x^{q^n}-x$, so we can apply Theorem \ref{comp} with
$s=1,t=0$.\end{proof}

\section{Preliminaries}

For the proof of our results we will need the following proposition, which is an extension of the ABC-theorem for function fields.

\begin{prop}\label{abcd} Let $K$ be the function field of the curve $C_K$ over $\F_q$ with genus $g_K$. Let $u\in K$ be a function which is not a $p$-th power
in $K$ and $b_1,...,b_m\in\F_q$.
Then
$$\sum_{P\in\cup\sup(u-b_i)}\deg P\ge (m-1)\h u-(2g_K-2).$$
\end{prop}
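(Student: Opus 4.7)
The plan is to interpret $u$ as a finite separable morphism from $C_K$ to the projective line over $\F_q$ of degree $n = \h u$, and apply Riemann--Hurwitz in the style of Mason's proof of the function field ABC theorem. Equivalently, one views $\F_q(u)\hookrightarrow K$ as a separable field extension of degree $n$; separability follows from the hypothesis $u \notin K^p$, which is the standard criterion for $u$ to be a separating transcendence base of $K/\F_q$.

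Each $b_i \in \F_q$, as well as $\infty$, is a degree-one prime of the rational function field $\F_q(u)$. Assuming the $b_i$ are distinct (otherwise pass to the distinct subset), let $S_b = \{P \in C_K : u(P) = b\}$ denote the primes of $K$ above $b$. The conorm identity gives
$$\sum_{P \in S_b} e_P \deg P \;=\; n$$
for each $b \in \{b_1, \dots, b_m, \infty\}$, which I would rewrite as $\sum_{P \in S_b} \deg P = n - \sum_{P \in S_b}(e_P - 1)\deg P$. Summing over the $m+1$ values $b_1, \ldots, b_m, \infty$ and using the decomposition $\bigcup_i \sup(u - b_i) = S_\infty \sqcup \bigsqcup_{i=1}^m S_{b_i}$ (poles of $u$ lie in every $\sup(u-b_i)$ since each $b_i$ is finite; the $S_{b_i}$ are pairwise disjoint by distinctness), one obtains
$$\sum_{P \in \cup_i \sup(u-b_i)} \deg P \;=\; (m+1)n \;-\; \sum_{b}\sum_{P \in S_b}(e_P - 1)\deg P.$$

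To conclude, the double sum on the right is bounded above by $\sum_P (e_P - 1)\deg P$ over all primes of $K$, which is itself at most the degree of the different $\mathfrak{D}$ of $K/\F_q(u)$; Riemann--Hurwitz then gives $\deg\mathfrak{D} = (2g_K - 2) + 2n$, and substitution yields the desired inequality $(m-1)\h u - (2g_K - 2)$. The only point that requires care is wild ramification: in characteristic $p$ the different may have strictly larger degree than $\sum(e_P - 1)\deg P$. Fortunately this discrepancy is in the favorable direction for the estimate, so it creates no genuine obstacle, and I do not anticipate any other difficulty.
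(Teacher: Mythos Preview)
Your proposal is correct and follows essentially the same approach as the paper: view $u$ as giving a separable extension $\F_q(u)\subset K$ of degree $\h u$, apply Riemann--Hurwitz in the inequality form $2g_K-2\ge -2\h u+\sum_P(e_P-1)\deg P$ (which absorbs any wild ramification), and combine with the conorm identity $\sum_{P\mid b}e_P\deg P=\h u$ summed over $b\in\{b_1,\dots,b_m,\infty\}$. The paper handles the wild-ramification issue by writing Riemann--Hurwitz directly as an inequality rather than passing through the different, but the content is identical.
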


\begin{proof} Consider the extension $\F_q(u)\ss K$. This is a separable geometric extension of function fields (because $u$ is not
a $p$-th power in $K$) of degree $\h u$, so we may apply the Riemann-Hurwitz formula to obtain
$$2g_K-2\ge -2\h u+\sum_P (e_P-1)\deg P,$$ where $e_P$ is the ramification index of the prime $P$ of $K$ in this extension (equality
is obtained if all the $e_P$ are coprime to $p$, but we do not assume this).
Restricting to the primes $P\in\cup_{i=1}^m\sup(u-b_i)$, which are exactly the primes lying over the primes $\F_q(u)$ corresponding to the points
$b_1,...,b_m,\ity$ on $\mathbf{P}^1$ (considering $\F_q(u)$ as the function field of $\mathbf{P}^1$) and using
$$\sum_{P\in\cup\sup(u-b_i)}e_P\deg P=[K:\F_q(u)]\cdot\#\{b_1,...,b_m,\ity\}=(m+1)\h u$$
we obtain 
$$2g_k-2+2\h u\ge \sum_{P\in\cup\sup(u-b_i)}(e_P-1)\deg P=(m+1)\h u-\sum_{P\in\cup\sup(u-b_i)}\deg P.$$ Therefore $$\sum_{P\in\cup\sup(u-b_i)}\deg P\ge (m-1)\h u-(2g_K-2),$$ as required.\end{proof}

Taking $m=2,b_1=0,b_2=1$ in the last proposition we obtain the ABC-theorem for function fields in the following form (see also \cite[Theorem 7.17]{rosen}):

\begin{prop}\label{abc} Let $K$ be the function field of the curve $C_K$ over $\F_q$ with genus $g_K$. Let $u\in K$ be a function which is not a $p$-th power
in $K$. Then
$$\sum_{P\in\sup(u)\cup\sup(u-1)}\deg P\ge \h u-(2g_K-2).$$\end{prop}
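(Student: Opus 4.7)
The plan is to obtain this as a direct specialization of Proposition \ref{abcd}, as hinted by the remark immediately preceding the statement. I would take $m=2$ and choose $b_1=0,\,b_2=1$; both lie in $\F_q$, so they are admissible choices. The only other hypothesis required of $u$ by Proposition \ref{abcd} is that it not be a $p$-th power in $K$, which is precisely what we have assumed. Thus both hypotheses are satisfied with no work.

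Substituting these choices into the conclusion of Proposition \ref{abcd}, the factor $(m-1)\h u$ becomes simply $\h u$, while the union of supports on the left-hand side is
$$\sup(u-0)\cup\sup(u-1)=\sup(u)\cup\sup(u-1).$$
The resulting inequality reads exactly
$$\sum_{P\in\sup(u)\cup\sup(u-1)}\deg P\ge \h u-(2g_K-2),$$
which is the desired bound. Since the entire analytic content (the Riemann--Hurwitz computation applied to the cover $K/\F_q(u)$) has already been carried out in Proposition \ref{abcd}, there is no substantive obstacle here; the role of Proposition \ref{abc} is merely to record the classical Mason--Stothers (ABC) shape of the estimate that will be invoked in later arguments. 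The only point worth a moment's thought is that $0$ and $1$ really are distinct elements of $\F_q$, which holds since $q\ge 2$.
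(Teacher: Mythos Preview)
Your proposal is correct and matches the paper's own argument exactly: the paper derives Proposition~\ref{abc} simply by taking $m=2$, $b_1=0$, $b_2=1$ in Proposition~\ref{abcd}. There is nothing to add.
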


\section{Proof of Theorem \ref{main}}

Let $K$ be the function field of the curve $C_K$ defined over $\F_q$ and let $F\in K[x]$ be a squarefree polynomial. We assume without loss of generality
that $F$ is monic (if $c$ is the leading coefficient of $F$ then $\del(F(f))=\del(F(f)/c)$ whenever $\del(F(f))>\del(c)$).

\begin{prop}\label{main1} Assume that there exist three distinct roots $a_1,a_2,a_3\in\kb$ of $F$ s.t. $$\tau=\frac{a_1-a_2}{a_1-a_3}\not\in\fqb.$$ Then the
assertion of Theorem \ref{main} holds for $F$.\end{prop}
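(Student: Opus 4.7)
The plan is to reduce the problem, via an ABC-type inequality applied over an extension field, to counting primes of bounded degree. Let $L = K(a_1,a_2,a_3)$, with constant field $\F_{q^\nu}$; under the hypothesis $\tau$ is non-constant in $L$, hence has some fixed finite height $H := \hl \tau$ depending only on $F$. The core of the argument is to apply Proposition \ref{abc} to a suitable M\"obius transformation of $f$ in $L$.

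Concretely, I would consider the two functions $u_{12} = (f-a_1)/(f-a_2)$ and $u_{13} = (f-a_1)/(f-a_3)$, each in $L$ and both of height $\hl f = ([L:K]/\nu)\h f$ (M\"obius transformations preserve heights). Up to a bounded contribution coming from the divisors of the $a_i$'s in $L$, the supports of $u_{1i}$ and $u_{1i}-1$ consist of primes of $L$ lying over primes of $K$ in $\sup(F(f))$---specifically over zeros of $f - a_1$, zeros of $f - a_i$, and poles of $f$.

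The hard step is handling the fact that Proposition \ref{abc} requires a non-$p$-th power, whereas each $u_{1i}$ may a priori be a $p^k$-th power for unbounded $k$, which would cost a factor of $p^k$ in the resulting bound and wash out the main term. This is precisely where the hypothesis $\tau \notin \fqb$ is needed. Solving $u_{1i} = v_i^{p^k}$ for $f$ in characteristic $p$ (using $1 - v_i^{p^k} = (1-v_i)^{p^k}$) gives
$$f - a_1 = (a_1 - a_i)\Bigl(\frac{v_i}{1 - v_i}\Bigr)^{p^k},$$
and equating the expressions for $i = 2, 3$ yields $\tau = [v_3(1-v_2)/(v_2(1-v_3))]^{p^k}$. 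Thus if both $u_{12}$ and $u_{13}$ are $p^k$-th powers in $L$ then so is $\tau$; since $\tau$ is non-constant, this forces $p^k \le H$, so at least one of $u_{12}, u_{13}$ has $p$-adic depth at most $\log_p H$.

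Choosing such a $u_{1i}$, I would write $u_{1i} = w^{p^k}$ with $w \in L$ not a $p$-th power; by Frobenius $(w-1)^{p^k} = u_{1i} - 1$, so $\sup(w) = \sup(u_{1i})$ and $\sup(w-1) = \sup(u_{1i}-1)$. Applying Proposition \ref{abc} to $w$ yields
$$\sum_{P \in \sup(u_{1i}) \cup \sup(u_{1i}-1)} \deg_L P \;\ge\; \hl w - (2g_L - 2) \;\ge\; \hl f/H - O(1).$$
Translating to primes of $K$ via $\sum_{P | P'} \deg_L P \le ([L:K]/\nu)\deg_K P'$, one obtains $\sum_{P' \in T} \deg_K P' \ge \h f/H - O(1)$ for some $T \subseteq \sup(F(f))$. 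Combining this with the Weil-type estimate $\sum_{P : \deg_K P \le D} \deg_K P \le c_K q^D$ applied at $D = \del(F(f))$ gives $c_K q^{\del(F(f))} \ge \h f/H - O(1)$, whence $\del(F(f)) > \log_q \h f - \lam$ with $\lam = \log_q(c_K H) + O(1)$ depending only on $F$ and $K$.
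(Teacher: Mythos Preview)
Your proposal is correct and follows essentially the same strategy as the paper: pass to an extension $L$ containing $a_1,a_2,a_3$, use the hypothesis $\tau\notin\fqb$ to bound the $p$-adic depth of a suitable M\"obius transform of $f$ (since if both candidates were $p^k$-th powers then so would be $\tau$, forcing $p^k\le \hl\tau$), apply Proposition~\ref{abc} to the $p^l$-th root, and finish by descending to $K$ and invoking the prime-counting bound. The only cosmetic differences are that the paper works in the full splitting field of $F$ and uses the affine normalizations $u=(f-a_2)/(a_1-a_2)$, $v=(f-a_3)/(a_1-a_3)$ (with $\tau=(v-1)/(u-1)$) in place of your cross-ratio style $u_{1i}=(f-a_1)/(f-a_i)$; these choices are related by a M\"obius change and lead to the same inequality.
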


\begin{proof}
Let $L$ be the splitting field of $F$ over $K$, $C_L$ its underlying curve with genus $g_L$.
Since $\tau\not\in\fqb$, for some $k$ the element $\tau\in L$ is not a $p^k$-th power in $L$.
Take any $f\in K$.
Denote $$u=\frac{f-a_2}{a_1-a_2},v=\frac{f-a_3}{a_1-a_3}.$$ It is not possible that both $u$ and $v$ are $p^k$-th powers in $L$ because then
so would be $\tau=v/u$ which we assumed is not the case. Assume (by symmetry) that $u$ is not a $p^k$-th power and let $l\le k$ be the largest integer
s.t. $u$ is a $p^l$-th power in $L$. Applying Proposition \ref{abc} to the function $u^{1/p^l}\in L$ (which is not a $p$-th power) we obtain
\beq\label{supu}\sum_{P\in\sup(u)\cup\sup(u-1)}\deg P\ge p^{-l}\h u-(2g_L-2).\eeq
We note that when considering degrees of divisors on $C_L$ we always consider the degree over the field of constants of $C_L$ (which is a finite extension
of $F_q$) and not over $F_q$ itself.
For a function $h\in K^\times$ we will denote by $(h)_K,\hk h$ its divisor and height (respectively) over $K$ and similarly for $L$.

Note that $$u-1=\frac{f-a_1}{a_1-a_2}.$$ Let $a_4,...,a_{\deg_F}\in L$ be the other roots of $F$, so that $F(x)=\prod_{i=1}^{\deg F}(x-a_i)$.
We have \beq\label{ffprod}F(f)=\prod_{i=1}^{\deg F}(f-a_i)=u(u-1)(a_1-a_2)^2\prod_{i=3}^{\deg F}(f-a_i).\eeq
Denote $$M=\max_{1\le i,j\le\deg F\atop{a_i\neq a_j}}\max_{P\in\sup(a_i-a_j)}\deg P.$$ Let $P$ be a prime divisor of $L$ with $\deg P>M$.
Then $P$ is a pole of $f-a_i$ for one $i$ iff it is a pole of each $f-a_j,1\le j\le\deg F$. Also $P\in\sup(u)$ iff $P\in\sup(f-a_2)$ and $P\in\sup(u-1)$
iff $P\in\sup(f-a_1)$. We see that if $P\in\sup(u)$ then it cannot cancel out in the product on the right hand side of \rf{ffprod} and so
$P\in\sup(F(f))_L$. The same holds if $P\in\sup(u-1)$.

We will denote by $O(1)$ quantities which are bounded by a constant depending only on $F$. We will use the notation $P\in\pdiv(L)$ to mean that $P$ is a prime divisor of $L$
and similarly with $K$. We have
$$\sum_{P\in\pdiv(L)\atop{\deg P\le M}}\deg P=O(1)$$ and so using \rf{supu} we obtain
\beq\label{ineq1}\sum_{P\in\sup(F(f))_L}\deg P\ge\sum_{P\in\sup(u)\cup\sup(u-1)}\deg P-O(1)\ge p^{-l}\hl u-O(1).\eeq 
For any prime divisor $Q$ of $K$ we have $$\sum_{P\in C_L\atop{P\,\mathrm{over}\,Q}}\deg P\le\frac{[L:K]}{\nu}\deg Q,$$
where $\F_{q^\nu}$ is the field of constants of $L$ (equality occurs if $Q$ is unramified). Therefore
$$\sum_{Q\in\sup(F(f))_K}\deg Q\ge\frac{\nu p^{-l}}{[L:K]}\hl u-O(1).$$ 
Assuming that $\hl f>\hl a_1$ we see that $\hl f=\hl u+O(1)$ and using $\hl f=\frac{[L:K]}{\nu}\hk f$ we obtain
\beq\label{mainineq}\sum_{Q\in\sup(F(f))_K}\deg Q\ge p^{-l}\hk f-O(1).\eeq

Let $d=\del(F(f))$ be the degree of the largest prime divisor of $K$ appearing in the support of $(F(f))_K$. By the prime number theorem for
function fields (see \cite[Theorem 5.12]{rosen}) for every natural number $e$ we have
$$\sum_{Q\in C_K\atop{\deg q=e}}\deg Q=q^e(1+o(1)),$$ with the $o(1)$ term tending to zero as $e\to\ity$, so
$$\sum_{Q\in C_K\atop{\deg q\le d}}\deg Q\le(1+o(1))\lb\sum_{e=1}^d q^e\rb=(1+o(1))(1-1/q)^{-1}q^d.$$
By \rf{mainineq} we obtain
\beq\label{mainineq1}(1+o(1))(1-1/q)^{-1}q^d\ge p^{-l}\hk f-O(1)\eeq and taking logarithms this becomes
$$\del(F(f))=d\ge \log_q\hk f-O(1)$$ as required.
\end{proof}

\begin{prop}\label{main2} Assume that $F$ is non-separable. Then the assertion of Theorem \ref{main} holds for $F$.\end{prop}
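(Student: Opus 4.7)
My plan is to reduce to an argument similar to the proof of Proposition~\ref{main1}, but where the inseparability of $F$ provides the structural ingredient needed in place of the cross-ratio condition. Since $F$ is squarefree and non-separable in $K[x]$, at least one irreducible factor $G\in K[x]$ must be inseparable, and any such $G$ has the form $G(x)=H(x^{p^e})$ with $H\in K[x]$ separable irreducible and $e\ge 1$ (take $e$ maximal). Since $G\mid F$ in $K[x]$, we have $\del(F(f))\ge\del(G(f))=\del(H(f^{p^e}))$, so it suffices to bound this latter quantity; setting $g=f^{p^e}\in K$ gives $\hk g=p^e\hk f$.

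If $H$ satisfies the hypothesis of Proposition~\ref{main1}---three distinct roots with cross-ratio not in $\fqb$---applying that proposition to the pair $H,g$ immediately yields $\del(H(g))\ge\log_q\hk g-O(1)=\log_q\hk f+e\log_q p-O(1)$, even stronger than required. If instead $\deg H=1$, then $G(x)=x^{p^e}-c$ with $c\in K\setminus K^p$ (forced by irreducibility of $G$), and one can work directly in $K$: Proposition~\ref{abcd} applied to $u=f^{p^e}/c$ with $b_1=0,b_2=1$ is valid since $f^{p^e}$ is a $p$-th power in $K$ while $c$ is not. Using $\hk u\ge p^e\hk f-O(1)$ together with $\sup(u)\cup\sup(u-1)\subseteq\sup(f^{p^e}-c)\cup\sup(f)\cup\sup(c)$---where $\sup(c)$ contributes $O(1)$ and the zeros of $f$ contribute at most $\hk f$ to any sum of degrees---one extracts $\sum_{P\in\sup(G(f))}\deg P\ge(p^e-1)\hk f-O(1)\ge\hk f-O(1)$, and then the function field prime number theorem (as at the end of the proof of Proposition~\ref{main1}) delivers $\del(G(f))\ge\log_q\hk f-O(1)$.

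The remaining, most delicate, case is $\deg H\ge 2$ with $H$ failing the hypothesis of Proposition~\ref{main1}. Here I would pass to the splitting field $L$ of $F$ over $K$, which contains the roots $\beta_i^{1/p^e}$ of $G$ for $\beta_1,\ldots,\beta_d$ the distinct roots of $H$. Picking any two of them, set $w=(f-\beta_2^{1/p^e})/(\beta_1^{1/p^e}-\beta_2^{1/p^e})\in L$; then $w^{p^e}=(f^{p^e}-\beta_2)/(\beta_1-\beta_2)$ lies in the separable subextension $L_H=K(\beta_1,\ldots,\beta_d)\subseteq L$. Let $l\ge 0$ be maximal such that $w=w_0^{p^l}$ for some $w_0\in L$ (so $w_0$ is not a $p$-th power in $L$), and apply Proposition~\ref{abc} to $w_0$; using the identity $(f-\beta_i^{1/p^e})^{p^e}=f^{p^e}-\beta_i$ in $L$, one has $\sup(w_0)=\sup(f^{p^e}-\beta_2)$ and $\sup(w_0-1)=\sup(f^{p^e}-\beta_1)$, both contained in $\sup(G(f))_L$ up to a finite correction, yielding $\sum_{P\in\sup(F(f))_L}\deg_L P\ge p^{-l}\hl f-O(1)$; converting from $L$ to $K$ exactly as in the proof of Proposition~\ref{main1} gives the analogous bound with $p^{-l}\hk f$. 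The hard part will be to bound $l$ uniformly in $f$: in Proposition~\ref{main1} this followed from the assumption $\tau\notin\fqb$, which is unavailable here. Instead I expect to bound $l$ structurally, using the relation $w^{p^{l+e}}\in L_H$ together with the finite purely inseparable degree $[L:L_H]=p^s$ to argue that $l$ is bounded by a constant depending only on $F$; invoking the PNT will then deliver $\del(F(f))\ge\log_q\hk f-O(1)$, as required.
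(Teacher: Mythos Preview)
Your reduction to an irreducible inseparable factor $G=H(x^{p^e})$ and the observation $\del(F(f))\ge\del(G(f))-O(1)$ (you wrote $\ge$, but an $O(1)$ correction is needed to handle primes in the supports of the coefficients of $F/G$) are fine, and your Case~A shortcut via Proposition~\ref{main1} is valid. Case~B is correct and is in fact the heart of the paper's argument.

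The genuine gap is Case~C. Your proposed bound on $l$ via the purely inseparable degree $[L:L_H]=p^s$ does not work: knowing $w_0\in L$ and $w_0^{p^{l+e}}\in L_H$ only tells you that $w_0^{p^s}\in L_H$, which gives no \emph{upper} bound on $l$, since $w_0^{p^s}$ could itself be a high $p$-th power in $L_H$. The quantity $l$ depends on $f$ and you have offered no mechanism (analogous to $\tau\notin\fqb$ in Proposition~\ref{main1}) that prevents it from growing.

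The fix is that your Case~B argument already handles Case~C, and this is exactly what the paper does uniformly. The key fact you used in Case~B but did not invoke in Case~C is that the roots $\beta_i$ of $H$ are \emph{not} $p$-th powers in $L_H$: if they all were, the coefficients of $H$ would lie in $L_H^p\cap K=K^p$ (using that $L_H/K$ is separable), so $G(x)=H(x^{p^e})$ would be a $p$-th power in $K[x]$, contradicting its irreducibility. Once you know $\beta_1\notin L_H^p$, set $u=\beta_1^{-1}f^{p^e}\in L_H$; since $f^{p^e}\in K^p\subseteq L_H^p$, the element $u$ is not a $p$-th power in $L_H$, and Proposition~\ref{abc} applies directly over $L_H$ with no lost factor of $p^{-l}$. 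One then subtracts the contribution of $\sup(u)$ (bounded by $\h_{L_H} f+O(1)$) to get $\sum_{P\in\sup(u-1)}\deg P\ge (p^e-1)\h_{L_H}f-O(1)$, and the rest proceeds as in your Case~B. This makes the case split B/C unnecessary and avoids the passage to the inseparable field $L$ altogether; the paper works entirely in the separable extension $L_H$.
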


\begin{proof} Since $F$ is squarefree and non-separable it has a non-separable irreducible factor $F_1\in K[x]$. It must be of the form $F_1(x)=G(x^p)$ with $G\in K[x]\sm\F_q[x]$ monic. Of course $G$ is also irreducible
over $K$. Let $L$ be the maximal separable extension of $K$ contained in the splitting field of $G$ over $K$. Over $L$ we have a factorization of the form
\beq\label{g1fact} G(x)=\prod_{i=1}^m\lb x^{p^r}-\al_i\rb,\al_i\in L\eeq
for some $r\ge 0$, with the $\al_i$ distinct.
If all the $\al_i$ are $p$-th powers in $L$ then the coefficients of $G$ are $p$-th powers in $L$ and therefore also in $K$ (because the
extension $K\ss L$ is separable), so $F_1(x)=G(x^p)$ is a $p$-th power of a polynomial in $K[x]$, which is impossible because $F_1$ divides $F$ and
$F$ is squarefree. Therefore we may assume one and therefore all the $\al_i$ (since they are conjugate over $K$) are not $p$-th power in $L$.

Take some $f\in K$ and denote $u=\al_1^{-1}f^{p^{r+1}}$. Since $\al_1$ is not a $p$-th power in $L$, neither is $u$. We have 
$$u-1=\al_1^{-1}\lb f^{p^{r+1}}-\al_1\rb.$$ We may apply Proposition \ref{abc} to $u$ to obtain
$$\sum_{P\in\sup(u)\cup\sup(u-1)}\deg P>\hl u-O(1)=p^{r+1}\hl f-O(1).$$
However $$\sum_{P\in\sup(u)}\deg P\le\sum_{P\in\sup(f)_L}\deg P+\sum_{P\in\sup(\al_1)}\deg P=\hl f+O(1),$$
so \beq\label{supu1}\sum_{P\in\sup(u-1)}\deg P>\lb p^{r+1}-1\rb\hl f-O(1).\eeq
By \rf{g1fact} we have
$$F_1(f)=\prod_{i=1}^m\lb f^{p^{r+1}}-\al_i\rb.$$
As in the proof of Case 1 we see that a prime divisor $P$ of $L$ of sufficiently large degree (depending only on $F$) occuring in $\sup(u-1)_L$
must also occur in $\sup(F_1(f))_L$. Using \rf{supu1} and arguing in the same way as in the proof of Proposition \ref{main2} we obtain
$$\del(F_1(f))>\log_q\hl f-O(1).$$

Denote $H=F/F_1\in K[x]$. Let $Q$ be a prime divisor of $K$. There
exists a constant $N$ depending only on $F$ s.t. if $\deg Q>N$ then $Q$ is a pole of either $F_1(f),H(f)$ iff it is a pole of $f$ (we just take $N$
to be the maximum of the degrees of all the poles of the coefficients of $F_1,H$). For such $Q$ if $Q\in\sup(F_1(f))_K$ then also $Q\in\sup(F(f))_K$
(zeroes and poles cannot cancel out by those of $H(f)$). Therefore $$\del(F(f))>\del(F_1(f))-O(1)>\log_q\hl-O(1),$$
as required.\end{proof}

Now Theorem \ref{main} follows by combining Propositions \ref{main1} and \ref{main2}.

\section{Proof of Theorem \ref{comp}}

Let $F\in K[x]$ be a separable polynomial of degree $m=\deg F$, $a_1,...,a_m\in\kb$ the roots of $F$.

\subsection{Proof of Theorem \ref{comp}(i)}

Let $F$ be exceptional. We want to show that it must divide a nonzero polynomial of the form 
\beq\label{exmod1}x^{q^n}-sx+t,s,t\in K,n\ge 0,\eeq as asserted in Theorem \ref{comp}(i). 
Let $L$ be the splitting field of $F$ over $K$ and $\F_{q^\nu}$ the field of constants in $L$.
Since $F$
is exceptional, for all distinct $1\le i,j,k\le m$ we have $(a_i-a_j)/(a_i-a_k)\in\fqb$. Equivalently, there exist
$\al,\be\in L$ and $b_i\in\fqn,1\le i\le m$ s.t. $a_i=\al b_i+\be$. Consider the polynomial
\beq\label{g}G(x)=\prod_{b\in\fqn}(x-\al b-\be)=x^{q^\nu}-\al^{q^{\nu-1}}x+\al^{q^{n-1}}\be-\be^{q^n}\in L[x]\eeq
(to see that this identity holds just substitute $\al b+\be$ into the RHS to see that it is a root for every $b\in\fqn$).
If $G\in K[x]$ then $F$ divides $G$ which has the form \rf{exmod1}. If $G\not\in K[x]$ then there exists an automorphism $\sig$ of $L$ over $K$ s.t. $G^{\sig}\neq G$ ($G^\sig$ is obtained from $G$ by applying $\sig$ to
each coefficient). The polynomial $F$ divides $G$ and since $F\in K[x]$ it also divides $G^\sig$. Therefore $F$ divides $G-G^\sig$. 
But from \rf{g} we see that $G-G^\sig$ is linear, so $F$ must be linear and already has the form \rf{exmod1}. 
This concludes the proof of one implication of Theorem \ref{comp}(i).

To prove the other implication assume that $F$ divides $G(x)=x^{q^n}-sx+t$ for some $n\ge 1,s,t\in K$ (if $F$ is linear it is obviously exceptional, so we may assume $n\ge 1$). There exist $\al,\be\in\kb$ s.t.
$$\al^{q^n}=s,\al^{q^n-1}\be-\be^{q^n}=t.$$ The roots of $G$ over $\kb$ are precisely $$\al b+\be, b\in\F_{q^n},$$ so the roots of $F$ have the form
$a_i=\al b_i+\be,b_i\in\F_{q^n}$ and $F$ is exceptional. This concludes the proof of Theorem \ref{comp}(i).

\subsection{Proof of Theorem \ref{comp}(ii)}

Suppose $F$ is exceptional and therefore divides the nonzero polynomial $$G(x)=x^{q^n}-sx+t,s,t\in K.$$ Since the assertion of Theorem \ref{comp}(ii) is trivial
for $F$ linear we assume that $n\ge 1$. If $s=0$ then $G$ has only one root over $\kb$ and $F$ cannot be separable unless it is linear. Hence we assume
that $s\neq 0$. Choose some $f_0\in K$ with a pole $P$ of degree $\deg P>\del(s),\del(t)$ and define recursively
$$f_{k+1}=\frac{f_k^{q^n}+t}{s},k\ge 0.$$
The prime divisor $\deg P$ is a pole of multiplicity $q^{kn}$ of $f_{k+1}$, therefore $\h f_{k+1}\to\ity$ as $k\to\ity$.
Now observe that
\begin{multline*}G(f_{k+1})=f_{k+1}^{q^n}-sf_{k+1}+t=\frac{f_k^{q^{2n}}+t^{q^n}}{s^{q^n}}-f_k^{q^n}=\\=\frac{\lb f_k^{q^n}-sf_k+t\rb^{q^n}}{s^{q^n}}=\lb\frac{G(f_k)}{s}\rb^{q^n},\end{multline*}
so $$\del(G(f_{k+1}))\le\max(\del(G(f_k)),s)$$ and therefore $\del(F(f_k))$ stays bounded as $k\to\ity$.

Now write $G(x)=F(x)H(x),H\in K[x]$. Let $P$ be a prime divisor not appearing in the supports of the coefficients of $F$,$H$. Then for any $f\in K^\times$, $P$ is a pole of $F(f)$ iff it is a pole of $f$ and of $H(f)$. We see that sufficiently large (depending only on the coefficients of $F$,$H$) prime divisor cannot
be canceled out when we multiply $F(f)$ by $H(f)$, so if $\del(F(f))$ is sufficiently large we have $\del(G(f))\ge\del(F(f))$,
so $\del(F(f_k))$ is also bounded as $k\to\ity$. This concludes the proof of Theorem \ref{comp}(ii).
 
\subsection{Proof of Theorem \ref{comp}(iii)}

Assume that $F$ is exceptional. As in the proof of Theorem \ref{main} we will also assume without loss of generality that $F$ is monic. If $F$ is linear the assertion of Theorem \ref{comp}(iii) is obvious, so we assume $\deg F=m\ge 2$. Assume that $F$ divides $$G(x)=x^{q^n}-sx+t,s,t\in K,n\ge 1.$$ 
We fix one such $G$ once and for all, so $s,t,n$ are also fixed. We have $s\neq 0$, otherwise $F$ would be linear. It follows that $G$ is separable because
its derivative is $-s\in K^\times$. Let $\al,\be\in\kb$ be such that $$\al^{q^n}=s,\al^{q^n-1}\be-\be^{q^n}=t.$$ Then the roots of $G$ are
$\al b+\be,b\in\F_{q^n}$ and the roots of $F$ have the form $a_i=\al b_i+\be,b_i\in\F_{q^n},1\le i\le m$.
Let $L$ be the splitting field of $G$ over $K\F_{q^n}$ (the composite of the fields $K,\F_{q^n}$). It is a separable extension of $K$ since $G$ is separable.
Since $\deg G\ge 2$ and $s\neq 0$, $G$ has at least two
distinct roots $\al b+\be,\al b'+\be$ from which it follows that $\al,\be\in L$.

Now let $f\in K$ be such that $sf-t$ is not a $p$-th power in $K$. Denote $u=(f-\be)/\al\in L$. We claim that $u$ is not a $p$-th power in $L$. 
Suppose to the contrary that $u$ is a $p$-th power. Then so is $u-b$ for any $b\in\fn$. Now
$$G(f)=\prod_{b\in\F_{q^n}}(f-\al b-\be)=\al^{q^n}\prod_{b\in\F_{q^n}}(u-b),$$ so $G(f)$ is a $p$-th power in $L$ and therefore also in $K$, because $K\ss L$
is a separable extension. But $G(f)=f^{q^n}-sf+t$, so $sf-t$ is also a $p$-th power, a contradiction. Therefore $u$ is not a $p$-th power.

Now we apply Proposition \ref{abcd} to the field $L$, function $u$ and constants $b_1,...,b_m$. We obtain the inequality 
$$\sum_{P\in\cup_{i=1}^m\sup(u-b_i)_L}\deg P>(m-1)\hl u-O(1),$$ where $O(1)$ stands for a quantity bounded by a constant depending only on $F$ and $G$
(the latter was fixed for a given exceptional $F$). Since 
$$F(f)=\prod_{i=1}^m(f-\al b_i-\be)=\al^m\prod_{i=1}^m(u-b_i)$$ we see that a prime divisor $P$ of $L$ with $\deg P>\del(\al)$ and appearing in 
$\cup_{i=1}^m\sup(u-b_i)_L$ must also appear in $\sup(F(f))_L$ (note that $u-b_i$ have the same poles for $1\le i\le m$),
so \beq\label{ineq}\sum_{P\in\sup(F(f))_L}\deg P>(m-1)\hl u-O(1).\eeq

Now denoting $d=\del(F(f))$ and arguing as in the proof of Proposition \ref{main1} (where we deduced \rf{mainineq1} from \rf{ineq1}) we deduce from \rf{ineq} that
$$(1+o(1))(1-1/q)^{-1}q^d>(m-1)\hk f-O(1),$$
($o(1)$ is a quantity tending to 0 as $\hk f\to\ity$ for fixed $F,G$)
from which it follows that
$$d>\log_q\hk f+\log_q(m-1)+\log_q(1-1/q)-o(1),$$ which is exactly the assertion of Theorem \ref{comp}(iii).

{\bf Acknowledgment.} The author would like to thank Ze\'{e}v Rudnick for suggesting the problem studied in the present paper and for his valuable remarks
on previous versions of the paper. The present work is part of the author's Ph.D. studies at Tel-Aviv University under his supervision. The author
would also like to thank Andrzej Schinzel for his useful remarks on this paper.

The research leading to these results has received funding from the European Research Council under the European Union's Seventh Framework Programme (FP7/2007-2013) / ERC grant agreement no 320755.

\end{document}